\documentclass[10pt,authoryear]{article}
\usepackage[hidelinks,colorlinks=true,linkcolor=blue,citecolor=blue,urlcolor=blue,]{hyperref}
\usepackage[body={15.5cm,21cm}, top=3cm]{geometry}
\usepackage{xcolor}
\usepackage{graphicx}
\usepackage{listings}
\usepackage{indentfirst}
\usepackage{float}
\usepackage{fancyhdr}
\usepackage{amsmath}
\usepackage{caption} 
\usepackage{amssymb} 
\usepackage{amsthm} 
\usepackage{authblk} 

\providecommand{\U}[1]{\protect\rule{.1in}{.1in}}
\providecommand{\U}[1]{\protect \rule{.1in}{.1in}}
\allowdisplaybreaks[1]
\newtheorem{theorem}{Theorem}[section]
\newtheorem{lemma}{Lemma}[section]
\newtheorem{remark}{Remark}[section]
\DeclareMathOperator*{\esssup}{ess\,sup}

\numberwithin{equation}{section} 

\begin{document}
\title{A note on  convergence rate for reflected BSDEs with quadratic generators by penalization method}
\author[a]{Guangyan Jia}
\author[b]{Peng Luo \thanks{\quad
Corresponding author. \\ 
\indent \hspace{1.5em}  E-mail addresses: jiagy@sdu.edu.cn(G. Jia), peng.luo@sjtu.edu.cn(P. Luo), zhumengbo@mail.sdu.edu.cn(M. Zhu)
}}
\author[a]{Mengbo Zhu}

\affil[a]{Zhongtai Securities Institute for Financial Studies, Shandong University, China}
\affil[b]{School of Mathematical Sciences, Shanghai Jiao Tong University, China}
\renewcommand*{\Affilfont}{\small\it}  
\date{}

\maketitle

\begin{abstract}
In this paper, we study the convergence rate between reﬂected backward stochastic diﬀerential equations with quadratic generators and their penalized BSDEs. Using techniques of BMO martingales, we prove the convergence rate is at order $\frac{1}{2}$ as a function of the penalty parameter. Finally, the result is applied to study numerical approximation of reflected BSDEs with sub-quadratic generators by the Euler’s polygonal line method.
\end{abstract}

    \textbf{Key words}: Reflected BSDEs, Penalization, BMO martingale, Euler's polygonal line method

\section{Introduction}
The theory of nonlinear backward stochastic diﬀerential equations (BSDEs for short) was introduced by Pardoux and Peng \cite{PP}. Subsequently, El Karoui et al. \cite{EKPPQ} considered the reflected BSDEs of the following form
	\begin{equation}
		\begin{cases}			
			Y_t=\xi+\int_t^T f(s,Y_s,Z_s)ds-\int_t^T Z_sdW_s+(K_T-K_t), \\
			Y_t\geq S_t,\ t\in[0,T],\ \textrm{and} \ \int_0^T (Y_s-S_s)dK_s=0,
		\end{cases}
    \nonumber
	\end{equation}
where the generator $f$ is uniformly Lipschitz in $(y,z)$. Compared with the classical BSDEs, the additional non-decreasing process $K$ arises to keep the solution $Y$ above the obstacle $S$. And the Skorohod condition $\int_0^T (Y_s-S_s)dK_s=0$ ensures that the adjustment of $Y$ is minimal. The reflected BSDEs provide a new sight to study optimal stopping problems, which is the foundation of many economic issues, including pricing American claims \cite{EPQ} and the entry decision of a firm into a new market. They also have wide applications in  stochastic control problems \cite{HJ,HZ} and probabilistic interpretation for PDEs \cite{BCFE}.

Due to the significance of BSDEs and reflected BSDEs in theoretical analysis and practical applications, there are a lot of works to weaken the classical assumption. One branch is to relax the Lipschitz condition on generators to quadratic case, i.e., the generators have quadratic growth in the $z$ component. The pioneering work of quadratic BSDEs was done by Kobylanski \cite{K}, where the existence and uniqueness result of quadratic BSDEs with bounded terminal conditions was established. Later, Briand and Hu \cite{BH} studied the quadratic BSDEs with unbounded terminal conditions. The reflected case was explored by Kobylanski et al. \cite{KLQT}  with bounded terminal  conditions and Lepeltier and Xu \cite{LX} with unbounded terminal conditions. For the multi-dimensional quadratic BSDEs and reflected BSDEs, we refer the readers to \cite{CL,HT,LZ24}. The quadratic BSDEs and reflected BSDEs have close connection with exponential utility theory including exponential utility maximization \cite{HIM, M} and stochastic equilibrium \cite{XZ}, pricing contingent claims in incomplete markets \cite{RE}, risk-sensitive control \cite{HT,LZ05,LZ24} and stochastic representations for PDEs \cite{BY,BC}.

In general cases, a large number of BSDEs and reflected BSDEs cannot be solved explicitly. Thus numerical approximation methods are needed. Ma et al. \cite{MPY} proposed the four step algorithm to approximate forward backward stochastic partial differential equations (FBSDEs for short). Bally and Pag{\`e}s \cite{BP} used quantization algorithms for reflected BSDEs whose generators are independent in $z$. Zhang \cite{Z} proved the $L^{2}$-regularity on $Z$, which is helpful for designing a effective numerical algorithm for the decoupled FBSDEs. For FBSDEs with reflection, Ma and Zhang \cite{MZ} attained a semi-explicit representation and some regularity results for $Z$ when the forward process X is an uniformly elliptic diﬀusion. Later, Bouchard and Chassagneux \cite{BbC} extended their result to  the case without the uniform ellipticity condition. In addition, regression-based methods were also prevalent for the numerical approximation of BSDEs and refletced BSDEs (see, e.g. \cite{BT,GL,GLW}). For the quadratic case, Richou \cite{R} and Chassagneux and Richou \cite{CR} investigate the numerical approximation of Markovian BSDEs whose generators have quadratic growth in $z$ and uniform Lipschitz continuity in $y$. Li and Tang \cite{LT} established Euler's polygonal line method for BSDEs with generators of sub-qudratic growth in $z$ and super-linearly growth in $y$. Sun et al. \cite{SLT} carried out the stability and convergence analysis for quadratic reflected BSDEs with unbounded terminal value. 

Recently, some numerical schemes based on penalization method have been established. The penalization method, which frequently appears in the existence problem of the reflected BSDEs, can also be used in numerical field. Specifically, consider the penalized BSDEs of the following form  
\begin{equation} 
\begin{cases}
Y_{t}^{\lambda}=\xi+\int_{t}^{T} f(s,Y_{s}^{\lambda},Z_{s}^{\lambda})ds
 - \int_{t}^{T} Z_{s}^{\lambda} dW_{s} +(K^{\lambda}_T-K^{\lambda}_t),\\
K^{\lambda}_{t}= \lambda \int_{0}^{t} (Y_{s}^{\lambda}-S_{s})^{-}ds.
 \nonumber
\end{cases}
\end{equation}
In the case of the penalty parameter $\lambda$ is large enough, their numerical schemes can be used to approximate the reflected BSDEs. The global error can be divided into two ingredients. One of them is the convergence error between reflected BSDEs and penalized BSDEs, the other is the discretization error of the penalized BSDEs. For this side, M{\'e}min et al. \cite{MPX} proposed a random walk scheme to approximate the penalized BSDEs. The scheme was also used in Martinez et al. \cite{MST} which combines the penalization and the Picard iteration. Bernhart et al. \cite{BPTW} considered the penalization method for  BSDEs with constrained jump. Last, Gobet and Wang \cite{GW} obtained the refined convergence rate between reflected BSDEs and their penalized BSDEs. All of these results are established under the assumption that the generators are Lipschitz continuous in $z$. The new feature of our work lies that we consider the penalization method for  reflected BSDEs with quadratic generators, and obtain the convergence rate with the same order as in \cite{GW}.

In this article, we investigate the convergence rate between penalized BSDEs and reflected BSDEs with quadratic generators. By employing the techniques of BMO martingales and estimates in \cite{GW}, we provide some a priori estimates that are essential to obtain the convergence rate between the reflected BSDEs and penalized BSDEs. The convergence rate is at order 1/2 as a function of penalty parameter $\lambda$. For applications, we studied the penalization method for the numerical scheme of reflected BSDEs with sub-quadratic generators. We used the Euler's polygonal line method propsed in \cite{LT} to approximate the penalized BSDEs with sub-quadratic generators. With the help of the convergence rate above, we get the global discretization error (logarithmic rate), which can be improved to polynomial rate with the additional monotone condition on the generator $f$.

The remainder of the paper is organized as follows. In Section \ref{label:Preliminaries and main Results}, we present some preliminary
results on quadratic BSDEs and quadratc reflected BSDEs. Then, we discuss our main result of convergence rate. In Section \ref{label:Numerical approximation of RBSDEs with sub-quadratic generators}, we establish the numerical approximation for reflected BSDEs with sub-quadratic generators.

\section{Preliminaries and main Results} \label{label:Preliminaries and main Results}
Let $\{W_{t},0\leq t\leq T\}$ be a $d$-dimentional standard Brownian motion definded on a complete probability space $(\Omega,\mathcal{F},\mathit{P})$ and $\{\mathcal{F}_t,0\leq t\leq T\}$ be the augmented natural filtration of $W$. In this paper, equalities and inequalities relevent to random variables and processes are discussed in the $P$-a.s. and $P\otimes dt$-a.e. sense respectively. The Euclidean norm is denoted by $\vert \cdot \vert$. For a random variable $\xi$, its $L^{\infty}$-norm is denoted by $\Vert \xi \Vert_{\infty} :=\esssup_{\omega\in\Omega}\vert\xi(\omega)\vert$.

For $p\geq2$, We denote by 
\begin{itemize}
\item[$\bullet$] $\mathcal{H}^{p}(\mathbb{R}^{n})$  the space of $n$-dimensional predictable processes $Y$ on $[0,T]$ such that\\
\begin{equation}
\Vert Y \Vert_{\mathcal{H}^{p}(\mathbb{R}^{n})}:=E\left[ \left( \int_{0}^{T}\vert Y_{t} \vert^{2}dt \right) ^{\frac{p}{2}} \right] ^{\frac{1}{p}}<\infty;  \nonumber
\end{equation}
and $\mathcal{H}^{p}_{[T_{1},T_{2}]}(\mathbb{R}^{n})$ the space of $n$-dimensional predictable processes $Y$ on $[T_{1},T_{2}]$ such that\\ 
\begin{equation}
\Vert Y \Vert_{\mathcal{H}^{p}_{[T_{1},T_{2}]}(\mathbb{R}^{n})}:=E\left[ \left( \int_{T_{1}}^{T_{2}}\vert Y_{t} \vert^{2}dt \right) ^{\frac{p}{2}} \right] ^{\frac{1}{p}}<\infty;  \nonumber
\end{equation}

\item[$\bullet$] $\mathcal{S}^{p}(\mathbb{R}^{n})$  the space of $n$-dimensional predictable processes $Y$ on $[0,T]$ such that\\
\begin{equation}
\Vert Y \Vert_{\mathcal{S}^{p}(\mathbb{R}^{n})}:=E\left[ \left( \sup\limits_{0\leq t\leq T} \vert Y_{t} \vert^{2} \right) ^{\frac{p}{2}} \right] ^{\frac{1}{p}}<\infty;  \nonumber
\end{equation}

\item[$\bullet$] $\mathcal{S}^{\infty}(\mathbb{R}^{n})$  the space of $n$-dimensional predictable processes $Y$ on $[0,T]$ such that\\
\begin{equation}
\Vert Y \Vert_{\mathcal{S}^{\infty}(\mathbb{R}^{n})}:=\left\Vert  \sup\limits_{0\leq t\leq T} \vert Y_{t} \vert   \right\Vert_{\infty} <\infty; \nonumber
\end{equation}

\item[$\bullet$] $\mathcal{S}^{\infty}_{[T_{1},T_{2}]}(\mathbb{R}^{n})$  the space of $n$-dimensional c{\`a}dl{\`a}g processes $Y$ on $[T_{1},T_{2}]$ such that\\
\begin{equation}
\Vert Y \Vert_{\mathcal{S}^{\infty}_{[T_{1},T_{2}]}(\mathbb{R}^{n})}:=\left\Vert  \sup\limits_{T_{1}\leq t\leq T_{2}} \vert Y_{t} \vert   \right\Vert_{\infty} <\infty. \nonumber
\end{equation}
\end{itemize}
Let $\mathcal{T}$ be the set of all stopping times with values in $[0,T]$. For any uniformly integrable martingale $M$ with $M_{0}=0$, we set
\begin{equation}
\Vert M \Vert_{BMO_{2}}:= \sup\limits_{\tau \in \mathcal{T}} \left\Vert
E \left[ \vert M_{T}-M_{\tau} \vert^{2} \vert \mathcal{F}_{\tau} \right]^{\frac{1}{2}} \right\Vert_{\infty}.  \nonumber
\end{equation}
The class $\{M:\Vert M \Vert_{BMO_{2}} < \infty \}$ is denoted by $BMO$. For
$(\alpha \cdot W)_{T} := \int_{0}^{T} \alpha_{s} dW_{s}$ in $BMO$, the corresponding stochastic exponential is denoted by $\mathcal{E}(\alpha \cdot W)_{0}^{T}$.

Consider a reflected BSDE with generator $f$, terminal value $\xi$ and continuous barrier $S$.\\
\begin{equation} \label{e1}
  \left\{
  \begin{aligned}
  &Y_{t}=\xi+\int_{t}^{T}f(s,Y_{s},Z_{s})ds+K_{T}-K_{t}-\int_{t}^{T}Z_{s}dW_{s}, t\in [0,T], \\
  &Y_{t}\geq S_{t}, 
  \int_{0}^{T}\left(Y_{t}-S_{t}\right)dK_{t}=0    & 
  \end{aligned} 
  \right. 
\end{equation}
where $K$ is a continous and nondecresing process with $K_{0}=0$. For $\lambda>0$, we give the penalized BSDE of \eqref{e1}.   
\begin{equation} \label{e2}
Y_{t}^{\lambda}=\xi+\int_{t}^{T} f(s,Y_{s}^{\lambda},Z_{s}^{\lambda})ds+
\lambda \int_{t}^{T} (Y_{s}^{\lambda}-S_{s})^{-}ds - \int_{t}^{T} 
Z_{s}^{\lambda} dW_{s}
\end{equation}
where we set $K^{\lambda}_{t} = \lambda \int_{0}^{t} (Y_{s}^{\lambda}-S_{s})^{-}ds$.\par
In this paper, we define a generic constant $C>0$, which does not depend on the penalty parameter $\lambda$ and the number of meshes $n$ when discussing the numerical scheme of BSDEs. The constant may change from line to line. We make the following assumptions.\par
$(A1)$ The function $f:\Omega \times [0,T] \times \mathbb{R} \times \mathbb{R}^{d} \rightarrow \mathbb{R}$ satisfies that $f(\cdot,y,z)$ is adapted for each $y \in \mathbb{R}$ and $z \in \mathbb{R}^{d}$. It holds that
\begin{equation}
\begin{aligned}
&\vert f(t,y,z) \vert \leq C(1+ \vert y \vert + \vert z \vert^{2}),\\
&\vert f(t,y_{1},z_{1})-f(t,y_{2},z_{2}) \vert \leq C \vert y_{1}-y_{2} \vert 
+ C(1+ \vert z_{1} \vert + \vert z_{2} \vert)\vert z_{1}-z_{2} \vert,
\end{aligned} \nonumber
\end{equation}
for $y,y_{1},y_{2} \in \mathbb{R}$, $z,z_{1},z_{2} \in \mathbb{R}^{d}$.

$(A2)$ $ S $ is a continuous progressively measurable real-valued process satisfying 
\begin{equation}
\Vert S^{+} \Vert_{\mathcal{S}^{\infty}(\mathbb{R})} \leq C. \nonumber
\end{equation} 

$(A3)$ The terminal condition $\xi \in \mathbb{R}$ is $\mathcal{F}_T$-measurable and satisfies
\begin{equation}
\Vert \xi \Vert_{\infty} \leq C. \nonumber
\end{equation}
In addition, for the purpose of obtaining convergence rate, we need extra assumption for barrier $S$.\par
$(A4)$ The barrier $S$ is a semi-martingale which can be written as 
\begin{equation}
S_{t} = S_{0}+\int_{0}^{t}U_{s}ds+\int_{0}^{t}V_{s}dW_{s}+A_{t} \nonumber
\end{equation}
where $U,V\in \mathcal{S}^{p}$, $A$ is a continuous and non-decreasing process satisfying $A_{T} \in \mathcal{L}^{p}$, $A_{0}=0$.\par
We recall the following results for the existence and uniqueness of solution of reflected BSDE \eqref{e1} and penalized BSDE \eqref{e2} from \cite{CL} and \cite{HT}.

\begin{lemma}\label{l1} Let assumptions $(A1)-(A3)$ be satisfied, then reflected BSDE \eqref{e1} has a unique solution $(Y,Z,K)$ such that $(Y,Z\cdot W,K)\in \mathcal{S}^{\infty}(\mathbb{R})\times BMO \times \mathcal{S}^{p}(\mathbb{R})$ for any $p\geq2$.
\end{lemma}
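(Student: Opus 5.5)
Since the lemma is recalled from \cite{CL} and \cite{HT}, the plan is to reconstruct the standard argument: uniform bounds on the penalized equations \eqref{e2}, a limit as $\lambda\to\infty$, and uniqueness by linearization with a BMO Girsanov change of measure.

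\textbf{Step 1: the penalized equations and monotonicity.} For fixed $\lambda$, the generator $f(t,y,z)+\lambda(y-S_t)^-$ still satisfies $(A1)$, with the $y$-Lipschitz constant increased by $\lambda$. Moreover $\xi$ is bounded by $(A3)$. Kobylanski's theory \cite{K} therefore gives a unique solution $(Y^\lambda,Z^\lambda)$ with $Y^\lambda\in\mathcal S^\infty$ and $Z^\lambda\cdot W\in BMO$. Since the penalty term is increasing in $\lambda$, the comparison theorem for quadratic BSDEs gives that $Y^\lambda$ is nondecreasing in $\lambda$.

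\textbf{Step 2: uniform bounds in $\lambda$.}
\begin{itemize}
\item \emph{Lower bound.} Comparing with the penalty-free BSDE ($\lambda=0$) gives $Y^\lambda\ge Y^0\ge -C$.
\item \emph{Upper bound.} Consider the quadratic BSDE with terminal value $\xi$ and generator $C(1+|y|+|z|^2)$, stopped in the Snell envelope sense above the bounded barrier $S^+\le C$. Comparison then gives $Y^\lambda\le C$ uniformly. The simplest way to see this: the deterministic ODE solution $\bar y_t$ with $\bar y_T=\max(\|\xi\|_\infty,\|S^+\|_\infty)$ and $\dot{\bar y}=-C(1+|\bar y|)$ stays above $S$. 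Hence its penalty term vanishes, and comparison yields $Y^\lambda\le\bar y$.
\item \emph{BMO bound.} Applying Itô's formula to $e^{\gamma Y^\lambda}$ with $\gamma$ large, from $\tau$ to $T$, absorbs the $|z|^2$ term. Because $Y^\lambda$ is bounded, this yields $\|Z^\lambda\cdot W\|_{BMO_2}\le C$ independently of $\lambda$.
\item \emph{Moments of the penalty.} Writing $K^\lambda_T-K^\lambda_\tau$ from the equation, $E[(K^\lambda_T)^p]\le C_p$ follows from the energy inequality for BMO martingales.
\end{itemize}

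\textbf{Step 3: passage to the limit.} This is the main obstacle, since with quadratic growth one cannot simply use $L^2$ Cauchy estimates. I would follow Kobylanski's monotone stability argument adapted as in \cite{KLQT}.
\begin{itemize}
\item Let $Y=\lim\uparrow Y^\lambda$, which is bounded by Step 2.
\item Obtain $Z^\lambda\to Z$ strongly in $\mathcal H^2$ by applying Itô's formula to $\phi(Y^\lambda-Y^\mu)$ with an exponential-type $\phi$ killing the quadratic cross terms. The penalty cross terms are handled using $(Y^\lambda-S)^-\to0$. That convergence comes from the uniform bound on $E[K^\lambda_T]$ together with Dini's theorem, applying the Peng-type monotone limit argument since $S$ is continuous.
\item Uniform convergence of $Y^\lambda$ in probability, via the continuity of the limit (Dini), then gives $K^\lambda\to K$ uniformly, with $K$ continuous and nondecreasing.
\item Finally, $\int(Y-S)dK=\lim\int(Y^\lambda-S)dK^\lambda\le0$, combined with $Y\ge S$, gives the Skorohod condition.
\item BMO of $Z\cdot W$ and $K_T\in L^p$ pass to the limit by Fatou's lemma.
\end{itemize}

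\textbf{Step 4: uniqueness.} Take two solutions and set $\delta Y=Y^1-Y^2$. Linearize $f$ as $a_t\delta Y+b_t\delta Z$ with $|b_t|\le C(1+|Z^1|+|Z^2|)$, so that $b\cdot W\in BMO$. Under the Girsanov measure $\mathcal E(b\cdot W)$, which is a true change of measure by Kazamaki, the Skorohod conditions give $\int(\delta Y)^+d(K^1-K^2)\le0$. Tanaka's formula applied to $(\delta Y^+)$ then gives $\delta Y^+=0$, and by symmetry $\delta Y=0$. Consequently $Z^1=Z^2$ and $K^1=K^2$.
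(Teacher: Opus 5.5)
The paper gives no argument for this lemma; it simply quotes \cite{CL} and \cite{HT}. Your penalization reconstruction therefore has to stand on its own. Steps 1 and 4 are fine, and so are the two $L^\infty$ bounds in Step 2.

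\textbf{A sign slip in the BMO bullet.} It\^o's formula for $e^{\gamma Y^\lambda}$ with $\gamma>0$ puts $+\gamma E_\tau[\int_\tau^T e^{\gamma Y^\lambda_s}dK^\lambda_s]$ on the right-hand side. The only available control of $E_\tau[K^\lambda_T-K^\lambda_\tau]$ goes back through $E_\tau[\int_\tau^T|Z^\lambda_s|^2ds]$, with a coefficient of order $\gamma e^{\gamma M}$ (where $M$ bounds $|Y^\lambda|$). The left-hand coefficient $\frac{\gamma^2}{2}e^{-\gamma M}$ cannot absorb it. Use $e^{-\gamma Y^\lambda}$ instead. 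Then $\gamma\int_\tau^T e^{-\gamma Y^\lambda_s}dK^\lambda_s$ sits on the left with a favourable sign, and you get both $\|Z^\lambda\cdot W\|_{BMO}\le C$ and $\|E_\tau[K^\lambda_T-K^\lambda_\tau]\|_\infty\le C$, uniformly in $\lambda$.

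\textbf{The genuine gap: the claim $\sup_{t}(Y^\lambda_t-S_t)^-\to0$ in Step 3.} Both the penalty cross term $\int\phi'(Y^\mu-Y^\lambda)\,dK^\mu$ in your $Z$-convergence argument and the Skorohod condition rest on it.

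\emph{Why your justification fails.} The bound on $E[K^\lambda_T]$ only gives $E\int_0^T(Y^\lambda_t-S_t)^-dt\le C/\lambda$, i.e.\ $Y\ge S$ $dt\otimes dP$-a.e. Dini needs $Y_t\ge S_t$ for every $t$. You invoke Peng's monotone limit theorem for the regularity of $Y$, but it requires the drifts $f(\cdot,Y^\lambda,Z^\lambda)$ to be bounded in $\mathcal H^2$. That means $\sup_\lambda E\int_0^T|Z^\lambda_t|^4dt<\infty$, and BMO bounds give nothing of the kind (think of $Z_t=(T-t)^{-1/4}$).

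\emph{What is missing.} You need the stopping-time argument of El Karoui et al.\ \cite{EKPPQ}, made to work by a BMO change of measure that removes the quadratic term.
\begin{enumerate}
\item Write $f(s,Y^\lambda_s,Z^\lambda_s)=f(s,Y^\lambda_s,0)+\beta^\lambda_sZ^\lambda_s$ with $|\beta^\lambda_s|\le C(1+|Z^\lambda_s|)$, and pass to $dQ^\lambda=\mathcal E(\beta^\lambda\cdot W)_0^TdP$, under which the drift is bounded.
\item Compare with the linear equation with penalty $\lambda(S_s-y)$. For every stopping time $\tau$ this gives $Y^\lambda_\tau\ge E^{Q^\lambda}[e^{-\lambda(T-\tau)}\xi+\lambda\int_\tau^Te^{-\lambda(s-\tau)}S_s\,ds\,|\,\mathcal F_\tau]-C/\lambda$.
\item Since $\sup_\lambda\|\beta^\lambda\cdot W\|_{BMO}<\infty$, the reverse H\"older inequality holds with $\lambda$-independent constants. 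First replace $S$ by $S\vee(-c)$ with $c\ge\|Y^0\|_{\mathcal S^\infty(\mathbb R)}$; this changes nothing because $Y^\lambda\ge Y^0$. Continuity of $S$ and dominated convergence then give $Y_\tau\ge S_\tau\mathbb I_{\{\tau<T\}}+\xi\mathbb I_{\{\tau=T\}}$.
\item The section theorem yields $Y\ge S$ up to evanescence, and only then does Dini apply.
\end{enumerate}

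\emph{A missing hypothesis.} The last step uses $\xi\ge S_T$. This is absent from (A1)--(A3), but it is necessary for the lemma itself (since $Y_T=\xi$) and is assumed in the cited sources. Your Step 3 uses it silently at $t=T$.

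\emph{Continuity of $Y$.} Once the uniform convergence is in hand, the Girsanov/BMO Cauchy estimate from the proof of Theorem \ref{th1} shows directly that $(Y^\lambda,Z^\lambda)$ is Cauchy in $\mathcal S^2(\mathbb R)\times\mathcal H^2(\mathbb R^d)$. This also gives the continuity of $Y$ that you need for the convergence of $K^\lambda$; you do not have to obtain it separately.
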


\begin{lemma}\label{l2} Let assumptions $(A1)-(A3)$ be satisfied, then penalized BSDE \eqref{e2} has a unique solution $(Y^{\lambda},Z^{\lambda})$ such that $(Y^{\lambda},Z^{\lambda}\cdot W) \in \mathcal{S}^{\infty}(\mathbb{R})\times BMO$. Besides, $(Y^{\lambda},Z^{\lambda}\cdot W)$ are uniformly bounded with respect to $\lambda$. That is 
\begin{equation}
\Vert Y^{\lambda} \Vert_{\mathcal{S}^{\infty}(\mathbb{R})} \leq C,\quad  \Vert Z^{\lambda}\cdot W \Vert_{BMO} \leq C .
\end{equation}
\end{lemma}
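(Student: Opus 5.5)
The plan is to view \eqref{e2} as an ordinary quadratic BSDE with generator $g^{\lambda}(s,y,z):=f(s,y,z)+\lambda(y-S_s)^-$. Existence and uniqueness will come from the standard quadratic theory (Kobylanski, or \cite{HT}). The $\lambda$-uniform bounds will come from comparison and an exponential change of variable. The main tool throughout is that the martingale $\int \beta_s dW_s$ is BMO whenever $|\beta_s|\le C(1+|Z_s|+|Z'_s|)$ with $Z\cdot W,Z'\cdot W\in BMO$. In that case $\mathcal{E}(\beta\cdot W)$ is a true martingale, and Girsanov linearizes differences of quadratic generators.

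\textbf{Step 1 (existence, uniqueness for fixed $\lambda$).} The map $y\mapsto\lambda(y-S_s)^-$ is Lipschitz with constant $\lambda$. Its value at $y=0$ is $\lambda S_s^+$, which is bounded by $(A2)$. Hence $g^\lambda$ satisfies $|g^\lambda(s,y,z)|\le C_\lambda(1+|y|+|z|^2)$ together with the local Lipschitz condition of $(A1)$ with $y$-constant $C+\lambda$. By $(A3)$ the terminal value is bounded. Kobylanski's theorem then gives a solution with $Y^\lambda\in\mathcal{S}^\infty$ and $Z^\lambda\cdot W\in BMO$. For uniqueness I will linearize the difference of two solutions: $\delta Y_t=\int_t^T(a_s\delta Y_s+\beta_s\delta Z_s)ds-\int_t^T\delta Z_s dW_s$ with $|a|\le C+\lambda$ and $\beta$ as above. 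Passing to the Girsanov measure $Q$ and integrating the factor $e^{\int a}$ gives $\delta Y=0$.

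\textbf{Step 2 (uniform bound on $Y^\lambda$).} For the lower bound I use comparison: the penalty term is nonnegative, so $Y^\lambda\ge \underline Y$, where $\underline Y$ solves the quadratic BSDE with generator $f$ and terminal value $\xi$. This bound is independent of $\lambda$. For the upper bound I note that $Y^\lambda\le Y^{\lambda'}$ for $\lambda\le\lambda'$ by comparison, and I compare $Y^\lambda$ with the reflected solution $Y$ of Lemma~\ref{l1}. The triple $(Y,Z,K)$ solves a BSDE with generator $f+\lambda(y-S)^-$, since $(Y-S)^-=0$, plus the extra increasing term $dK$. So $Y$ is a supersolution of \eqref{e2}, and comparison for quadratic BSDEs via the same linearization gives $Y^\lambda\le Y$. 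The comparison is legitimate because the $y$-coefficient of the linearization is $\lambda$-dependent but bounded for fixed $\lambda$, and the $z$-coefficient is BMO. Altogether $\underline Y\le Y^\lambda\le Y$, and both bounds are in $\mathcal{S}^\infty$ independently of $\lambda$.

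\textbf{Step 3 (uniform BMO bound on $Z^\lambda$).} This is the delicate step, because the penalty term is not uniformly bounded in $\lambda$. I would apply Itô's formula to $\phi(Y^\lambda_t)$ with $\phi(y)=(e^{\gamma y}-1-\gamma y)/\gamma^2$, or to $e^{\gamma Y^\lambda}$ with $\gamma$ large, between a stopping time $\tau$ and $T$. The quadratic growth $C|z|^2$ is absorbed by the term $\tfrac12\phi''|z|^2$ once $\gamma\ge 4C$. This leaves
$$E\Big[\int_\tau^T|Z^\lambda_s|^2ds\,\Big|\,\mathcal{F}_\tau\Big]\le C+C\,E\Big[\int_\tau^T|\phi'(Y^\lambda_s)|\,dK^\lambda_s\,\Big|\,\mathcal{F}_\tau\Big].$$
Since $Y^\lambda$ is uniformly bounded, it then suffices to bound $E[K^\lambda_T-K^\lambda_\tau\mid\mathcal{F}_\tau]$ uniformly in $\lambda$. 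To do this I write the equation between $\tau$ and $T$ as
$$K^\lambda_T-K^\lambda_\tau=Y^\lambda_\tau-\xi-\int_\tau^T f\,ds+\int_\tau^T Z^\lambda dW.$$
Taking conditional expectations and using $|f|\le C(1+|Y|+|Z|^2)$ gives $E[K^\lambda_T-K^\lambda_\tau\mid\mathcal{F}_\tau]\le C+C\,E[\int_\tau^T|Z^\lambda|^2ds\mid\mathcal{F}_\tau]$. This bound is circular. To break the circularity, I will instead choose the test function so that the $dK^\lambda$ term has a favorable sign. Applying Itô's formula to $e^{-\gamma Y^\lambda}$, the $dK^\lambda$ contribution $-\gamma e^{-\gamma Y^\lambda}dK^\lambda$ has a good sign, and the quadratic growth is still absorbed for $\gamma$ large, because only $|f|\le C(1+|z|^2)$ matters. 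Concretely, $d(e^{-\gamma Y^\lambda})$ contains $\gamma e^{-\gamma Y}(f\,ds+dK)+\tfrac{\gamma^2}{2}e^{-\gamma Y}|Z|^2ds$ with the appropriate signs. Integrating from $\tau$ to $T$ and using the boundedness of $Y^\lambda$ then yields simultaneously $E[\int_\tau^T|Z^\lambda|^2ds\mid\mathcal{F}_\tau]\le C$ and $E[K^\lambda_T-K^\lambda_\tau\mid\mathcal{F}_\tau]\le C$, independently of $\lambda$. Getting this sign bookkeeping right is the main obstacle, and it is where the plan could fail.
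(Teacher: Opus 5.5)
Your argument is correct. The paper gives no proof of this lemma at all; it only recalls the result from \cite{CL} and \cite{HT}.

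You derive it directly, in three steps. Existence and uniqueness for fixed $\lambda$ come from Kobylanski's theorem plus a BMO/Girsanov linearization. The $\lambda$-free bound on $Y^\lambda$ comes from sandwiching it between the unpenalized quadratic BSDE and the reflected solution of Lemma~\ref{l1}, which is a supersolution of \eqref{e2} because $(Y-S)^-=0$. The $\lambda$-free BMO bound comes from It\^o's formula for $e^{-\gamma Y^\lambda}$.

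The step you flagged does go through. With $\gamma=4C$,
\begin{equation*}
e^{-\gamma\xi}-e^{-\gamma Y^\lambda_\tau}+\int_\tau^T\gamma e^{-\gamma Y^\lambda_s}Z^\lambda_s\,dW_s=\int_\tau^T\gamma e^{-\gamma Y^\lambda_s}\Big(f(s,Y^\lambda_s,Z^\lambda_s)+\tfrac{\gamma}{2}|Z^\lambda_s|^2\Big)ds+\int_\tau^T\gamma e^{-\gamma Y^\lambda_s}\,dK^\lambda_s .
\end{equation*}
Here the $ds$-integrand is at least $\gamma e^{-\gamma Y^\lambda_s}\big(C|Z^\lambda_s|^2-C(1+|Y^\lambda_s|)\big)$, and the $dK^\lambda$-term is nonnegative. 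The stochastic integral is a true martingale for fixed $\lambda$, and $e^{\pm\gamma Y^\lambda}$ is bounded uniformly in $\lambda$ by your Step 2. Conditioning on $\mathcal{F}_\tau$ therefore gives both
\begin{equation*}
E\Big[\int_\tau^T|Z^\lambda_s|^2ds\,\Big|\,\mathcal{F}_\tau\Big]\le C
\quad\text{and}\quad
E\big[K^\lambda_T-K^\lambda_\tau\,\big|\,\mathcal{F}_\tau\big]\le C .
\end{equation*}
Your first attempt with an increasing test function can simply be discarded.

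Compared with the bare citation, your route makes the $\lambda$-independence of the constants transparent. This is not automatic: the penalized generator has $y$-Lipschitz constant $\lambda$, and generic a priori estimates would give constants like $e^{C\lambda}$, which is the issue the paper itself meets in the Remark of Section~3. Your by-product bound on the conditional increments of $K^\lambda$ also gives $E[(K^\lambda_T)^p]\le C_p$ via the energy inequality. That is exactly the estimate the paper invokes in Step 2 of Theorem~\ref{th1}, citing only Lemma~\ref{l2} and \eqref{e2}.

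One optional refinement: your upper bound relies on Lemma~\ref{l1}, and existence for reflected quadratic BSDEs is itself often proved by penalization. You can avoid that dependence by comparing $Y^\lambda$ with the deterministic supersolution $\phi(t)=(c_0+1)e^{C(T-t)}-1$, where $c_0=\max(\Vert\xi\Vert_\infty,\Vert S^+\Vert_{\mathcal{S}^\infty(\mathbb{R})})$. The penalty vanishes on $\phi$ because $\phi\ge c_0\ge S$.
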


\begin{theorem}\label{th1} 
Under assumptions $(A1)-(A4)$, let $(Y,Z,K)$ be the solution of reflected BSDE \eqref{e1} and $(Y^{\lambda},Z^{\lambda})$ be the solution of penalized BSDE \eqref{e2}, then it holds that  
\begin{equation} \label{th21}
E \left[ \sup\limits_{0\leq t\leq T} (Y_{t}-Y_{t}^{\lambda})^{p}+\left(\int_{0}^{T} \vert Z_{t}-Z_{t}^{\lambda} \vert^{2} dt \right)^{\frac{p}{2}}+\sup\limits_{0\leq t\leq T}(K_{t}-K_{t}^{\lambda})^{p} \right] \leq \frac{C_{p}}{\lambda^{\frac{p}{2}}}.
\end{equation}
where $C_{p}$ is independent of $\lambda$.
\end{theorem}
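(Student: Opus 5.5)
We follow the strategy of Gobet and Wang \cite{GW}, replacing the Lipschitz-in-$z$ arguments by a change of measure through BMO martingales. The proof has three steps: a bound on the obstacle violation, a comparison of $Y$ and $Y^{\lambda}$, and an estimate for $Z-Z^{\lambda}$ and $K-K^{\lambda}$.

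\textbf{Step 1: the obstacle violation.} We aim for
\[
E\Big[\sup_{t}\big((Y^{\lambda}_t-S_t)^{-}\big)^{p}\Big]\leq \frac{C_p}{\lambda^{p/2}}.
\]
Lemma \ref{l2} gives the uniform bounds on $\Vert Y^\lambda\Vert_{\mathcal S^\infty}$ and $\Vert Z^\lambda\cdot W\Vert_{BMO}$. Apply It\^o--Tanaka to $(Y^\lambda_t-S_t)^-$, using the decomposition $(A4)$ of $S$.

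The quadratic term $f(s,Y^\lambda_s,Z^\lambda_s)$ is handled by linearization. Write $f(s,Y^\lambda,Z^\lambda)=f(s,Y^\lambda,0)+\beta^\lambda_s\cdot Z^\lambda_s$, where $|\beta^\lambda|\le C(1+|Z^\lambda|)$. Then $\beta^\lambda\cdot W$ is BMO with norm bounded uniformly in $\lambda$. Under the Girsanov measure $Q^\lambda$ with density $\mathcal E(\beta^\lambda\cdot W)_0^T$, the process $Y^\lambda-S$ solves a linear equation with penalization rate $\lambda$. The forcing terms are $U$, $dA$ and $V\cdot\beta^\lambda$. They are in $L^p(Q^\lambda)$ by reverse H\"older and the energy inequalities, with constants uniform in $\lambda$ because the BMO norms are uniform.

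The explicit representation
\[
(Y^\lambda_t-S_t)^-\le E^{Q^\lambda}\Big[\int_t^T e^{-\lambda(s-t)}\big(|U_s|+|V_s\cdot\beta^\lambda_s|+|f(s,Y^\lambda_s,0)|\big)ds+\int_t^T e^{-\lambda(s-t)}dA_s\,\Big|\,\mathcal F_t\Big]
\]
should give the rate $\lambda^{-1/2}$, as in \cite{GW}. The $dA$ term needs the same local-time-free treatment as in \cite{GW}. We then transfer the bound back to $P$ using Doob's inequality and the fact that $\mathcal E(\beta^\lambda\cdot W)$ satisfies a reverse H\"older inequality $R_q$ with uniform constant. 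The price is replacing $p$ by a larger $p'$, which is allowed since $(A4)$ holds for all $p$.

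\textbf{Step 2: comparison of $Y$ and $Y^\lambda$.} By comparison, $Y^\lambda\le Y$. Set $\delta Y=Y-Y^\lambda\ge0$ and $\delta Z=Z-Z^\lambda$, and linearize:
\[
f(Y,Z)-f(Y^\lambda,Z^\lambda)=a_s\,\delta Y+b_s\cdot\delta Z,
\]
where $|a|\le C$, $|b|\le C(1+|Z|+|Z^\lambda|)$, so $b\cdot W\in BMO$ uniformly. Under $\tilde Q$ we get
\[
\delta Y_t=E^{\tilde Q}\Big[\int_t^T e^{\int_t^s a}\,(dK_s-dK^\lambda_s)\,\Big|\,\mathcal F_t\Big].
\]
The Skorohod condition gives $dK_s=\mathbf 1_{\{Y_s=S_s\}}dK_s$. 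On $\{Y=S\}$ we have $\delta Y=(Y^\lambda-S)^-$, so
\[
\delta Y_t\le C\,E^{\tilde Q}\Big[\sup_s (Y^\lambda_s-S_s)^-\,K_T\,\Big|\,\mathcal F_t\Big].
\]
Equivalently, one applies It\^o to $e^{\int a}\delta Y$ and uses $\int(Y-Y^\lambda)\,dK\le\int (Y^\lambda-S)^-dK$. Combining this with Step 1, H\"older's inequality, $K_T\in L^p$ (Lemma \ref{l1}) and reverse H\"older, we get $E[\sup_t\delta Y_t^p]\le C_p\lambda^{-p/2}$.

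\textbf{Step 3: $Z$ and $K$.} Apply It\^o to $|\delta Y|^2$. The quadratic cross term $b\cdot\delta Z\,\delta Y$ is absorbed using $\int |b|^2ds$, whose exponential moments are controlled by the BMO property and the John--Nirenberg inequality. The term $\int\delta Y\,d(K-K^\lambda)$ is bounded by $\sup_t\delta Y_t\,(K_T+K^\lambda_T)$. BDG then yields the bound on $E[(\int_0^T|\delta Z|^2dt)^{p/2}]$, given $E[(K^\lambda_T)^{p}]\le C$ uniformly. The latter follows from the equation together with Lemma \ref{l2} and the energy inequality.

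Finally, $K-K^\lambda$ is read off from the two equations:
\[
K_t-K^\lambda_t=\delta Y_0-\delta Y_t-\int_0^t\big(a\,\delta Y+b\cdot\delta Z\big)ds+\int_0^t\delta Z\,dW.
\]
Its sup is controlled by BDG and the energy inequality for $b$ combined with H\"older's inequality.

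The main obstacle is Step 1. The Lipschitz argument of \cite{GW} must be made to work with the $\lambda$-dependent Girsanov drift $\beta^\lambda$. This requires uniform reverse H\"older constants, which follow from the uniform BMO bound in Lemma \ref{l2}, and careful bookkeeping of the increased integrability exponent.
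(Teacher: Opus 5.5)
Your Step 2 does not prove the linear bound it asserts. In the representation $\delta Y_t=E^{\tilde Q}\bigl[\int_t^T e^{\int_t^s a_r\,dr}(dK_s-dK^\lambda_s)\,\big|\,\mathcal F_t\bigr]$, the integrand against $dK$ is the discount factor, not $\delta Y_s$. So the identity $\delta Y=(Y^\lambda-S)^-$ on $\{Y=S\}$ never enters, and the representation only gives $\delta Y_t\le e^{CT}E^{\tilde Q}[K_T-K_t\,|\,\mathcal F_t]=O(1)$. Likewise, It\^o's formula for $e^{\int a}\delta Y$ contains no term $\int\delta Y\,dK$. That product appears only in It\^o's formula for $|\delta Y|^2$, which yields $|\delta Y_t|^2\lesssim E^{\tilde Q}[\int_t^T(Y^\lambda_s-S_s)^-dK_s\,|\,\mathcal F_t]$. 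Combined with your Step 1, this gives only $\delta Y=O(\lambda^{-1/4})$. The $Y$-estimate can be rescued by localizing. For $\tau_t:=\inf\{s\ge t: Y_s=S_s\}\wedge T$, $K$ is flat on $[t,\tau_t]$, hence $\delta Y_t\le e^{CT}E^{\tilde Q}[(Y^\lambda_{\tau_t}-S_{\tau_t})^-\,|\,\mathcal F_t]$. Step 1, Doob's inequality and reverse H\"older then give $\sup_t\delta Y_t$ at order $\lambda^{-1/2}$ in $L^p$.

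Step 3, however, loses half the rate. Bounding $\int\delta Y\,d(K-K^\lambda)$ by $\sup_t\delta Y_t\,(K_T+K^\lambda_T)$ gives only $\int_0^T|\delta Z_t|^2dt=O(\lambda^{-1/2})$, i.e.\ $E[(\int_0^T|\delta Z_t|^2dt)^{p/2}]\lesssim\lambda^{-p/4}$. The same loss passes to $K-K^\lambda$ through your final identity. What is needed is $\int_0^T(Y^\lambda_t-S_t)^-dK_t=O(1/\lambda)$, since $\delta Y\,dK=(Y^\lambda-S)^-dK$. A sup-norm bound of order $\lambda^{-1/2}$ on $(Y^\lambda-S)^-$ cannot produce it, and that is all Step 1 delivers, because the $V\cdot\beta^\lambda$ forcing is only controlled by Cauchy--Schwarz.

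The missing idea, which is the core of the paper's proof, is that (A4) makes $K$ absolutely continuous: $dK_t\le\kappa_t\mathbb{I}_{\{Y_t=S_t\}}dt$ with $\kappa_t=(f(t,Y_t,Z_t)+U_t)^-$ (El Karoui et al., Prop.~4.2). Since $Z=V$ a.e.\ on $\{Y=S\}$, this density is at most $C(1+|U_t|+|V_t|^2)$. Hence
\[
\int_0^T(Y^\lambda_t-S_t)^-dK_t\le C\Bigl(1+\sup_t|U_t|+\sup_t|V_t|^2\Bigr)\frac{K^\lambda_T}{\lambda},
\]
which is $O(1/\lambda)$ in every $L^p$ because $K^\lambda_T=\lambda\int_0^T(Y^\lambda_t-S_t)^-dt$ is bounded in $L^p$ uniformly in $\lambda$.

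With this bound the paper needs neither your Step 1 nor comparison. It linearizes only the $z$-increment $f(s,Y^\lambda_s,Z_s)-f(s,Y^\lambda_s,Z^\lambda_s)$ and changes measure accordingly. It then notes $\delta Y\,dF\le C|\delta Y|^2ds+(Y^\lambda-S)^-dK$ for $dF=(f(s,Y_s,Z_s)-f(s,Y^\lambda_s,Z_s))ds+dK_s-dK^\lambda_s$. A single quadratic a priori estimate (Gobet--Wang, Prop.~A.1) then bounds $\sup_t|\delta Y_t|^p$ and $(\int_0^T|\delta Z_t|^2dt)^{p/2}$ together by $C_pE^\lambda[(\int_0^T(Y^\lambda_t-S_t)^-dK_t)^{p/2}]\lesssim\lambda^{-p/2}$.

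A minor point in Step 1: the $dA$ term, like the local time, enters Tanaka's formula with a favorable sign and should simply be dropped. Kept as the forcing $\int_t^Te^{-\lambda(s-t)}dA_s$, it is not $O(\lambda^{-1/2})$ for a general continuous nondecreasing $A$.
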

\begin{proof}
We divide the proof into three steps:\\
\textbf{Step1} We consider a priori estimation for the process $Y_{t}-Y_{t}^{\lambda}$.
\begin{equation} \label{y-ylambda}
\begin{aligned}
Y_{t}-Y_{t}^{\lambda}&= \int_{t}^{T} (f(s,Y_{s},Z_{s})-f(s,Y_{s}^{\lambda},Z_{s}^{\lambda}))ds+(K_{T}-K_{t})-(K_{T}^{\lambda}-K_{t}^{\lambda}) \\ &\quad- \int_{t}^{T} 
(Z_{s}-Z_{s}^{\lambda}) dW_{s}\\
&=\int_{t}^{T} (f(s,Y_{s},Z_{s})-f(s,Y_{s}^{\lambda},Z_{s})+f(s,Y_{s}^{\lambda},Z_{s})-f(s,Y_{s}^{\lambda},Z_{s}^{\lambda}))ds \\ &\quad+(K_{T}-K_{t})-(K_{T}^{\lambda}-K_{t}^{\lambda})- \int_{t}^{T} 
(Z_{s}-Z_{s}^{\lambda}) dW_{s}\\
&=\int_{t}^{T} (f(s,Y_{s},Z_{s})-f(s,Y_{s}^{\lambda},Z_{s})+\beta_{s}^{\lambda}(Z_{s}-Z_{s}^{\lambda}))ds \\ 
&\quad+(K_{T}-K_{t})-(K_{T}^{\lambda}-K_{t}^{\lambda})- \int_{t}^{T} 
(Z_{s}-Z_{s}^{\lambda}) dW_{s} \\
&=\int_{t}^{T} (f(s,Y_{s},Z_{s})-f(s,Y_{s}^{\lambda},Z_{s}))ds+(K_{T}-K_{t})-(K_{T}^{\lambda}-K_{t}^{\lambda}) \\ 
&\quad- \int_{t}^{T} 
(Z_{s}-Z_{s}^{\lambda}) dW_{s}^{\lambda},
\end{aligned}
\end{equation}
where the process $\beta_{s}^{\lambda}$, according to $(A1)$,  satisfies 
$\vert \beta_{s}^{\lambda} \vert \leq C(1+\vert Z_{s} \vert+\vert Z_{s}^{\lambda} \vert)$. It follows from Lemma \ref{l1} and Lemma \ref{l2} that 
$\Vert \beta^{\lambda}\cdot W \Vert_{BMO} \leq C$. Therefore $W_{s}^{\lambda}=W_{s}-\int_{0}^{s}\beta_{r}^{\lambda}dr$
is a Brownian motion under an equivalent probability measure 
$dP^{\lambda}=\mathcal{E}(\beta_{s}^{\lambda}\cdot W)_{0}^{T}dP$.\par
The process $Y_{t}-Y_{t}^{\lambda}$ can be written as 
\begin{equation}
Y_{t}-Y_{t}^{\lambda}=\int_{t}^{T}dF_{s}-\int_{t}^{T}(Z_{s}-Z_{s}^{\lambda})dW_{s}^{\lambda} \nonumber
\end{equation}
where the scalar process $F$ satisfies
\begin{equation}
dF_{s}=(f(s,Y_{s},Z_{s})-f(s,Y_{s}^{\lambda},Z_{s}))ds+dK_{s}-dK_{s}^{\lambda}.
\nonumber
\end{equation}
Then we have
\begin{equation}
\begin{aligned}
(Y_{t}-Y_{t}^{\lambda})dF_{t} &\leq C(Y_{t}-Y_{t}^{\lambda})^{2}dt+(Y_{t}-Y_{t}^{\lambda})dK_{t}-(Y_{t}-Y_{t}^{\lambda})dK_{t}^{\lambda} \\
&\leq C(Y_{t}-Y_{t}^{\lambda})^{2}dt+(Y_{t}-Y_{t}^{\lambda})dK_{t} \\
&= C(Y_{t}-Y_{t}^{\lambda})^{2}dt+(Y_{t}-S_{t})dK_{t}-(Y_{t}^{\lambda}-S_{t})dK_{t} \\
&\leq C(Y_{t}-Y_{t}^{\lambda})^{2}dt+(Y_{t}^{\lambda}-S_{t})^{-}dK_{t}.
\nonumber
\end{aligned}
\end{equation}
By setting
\begin{equation}
D,N,\eta=0,R_{t} = \int_{0}^{t}(Y_{s}^{\lambda}-S_{s})^{-}dK_{s},\Upsilon_{t}=C t,\nonumber
\end{equation}
we obtain from \cite[Proposition A.1]{GWsupply}
that
\begin{equation} \label{step1}
E^{\lambda} \left[ \sup\limits_{0\leq t\leq T} \vert Y_{t}-Y_{t}^{\lambda} \vert^{p} \right] + E^{\lambda} \left[ \left( \int_{0}^{T} \vert Z_{t}-Z_{t}^{\lambda} \vert^{2}dt \right)^{\frac{p}{2}} \right] \leq 
C_{p}E^{\lambda} \left[ \left( \int_{0}^{T} (Y_{t}^{\lambda}-S_{t})^{-}dK_{t} 
\right )^{\frac{p}{2}} \right] 
\end{equation}
for any $p \geq 2$. \\
\textbf{Step2} We prove that $E \left[ \left( \int_{0}^{T} (Y_{t}^{\lambda}-S_{t})^{-}dK_{t}  \right )^{\frac{p}{2}} \right] \leq \frac{C}{\lambda^{\frac{p}{2}}}$. From assumption $(A4)$ and \cite[Proposition 4.2 and Remark 4.3]{EKPPQ}, we have
\begin{equation*}
\left \{
\begin{aligned}
&dK_{t}\leq \kappa_{t}\mathbb{I}_{\{Y_{t}=S_{t}\}}dt,\\
&\kappa_{t}:=(f(t,Y_{t},Z_{t})+U_{t})^{-}.
\end{aligned}
\right.
\end{equation*}
From the fact that 
$(Y,Z\cdot W) \in \mathcal{S}^{\infty}(\mathbb{R})\times BMO$ and $U\in \mathcal{S}^{p}$, we have $\kappa \in \mathcal{S}^{p}$. Then 
\begin{equation*}
\begin{aligned}
E \left[ \left( \int_{0}^{T} (Y_{t}^{\lambda}-S_{t})^{-}dK_{t}  \right )^{\frac{p}{2}} \right] 
&\leq E \left[ \left( \sup\limits_{0\leq t\leq T} \kappa_{t} \int_{0}^{T} (Y_{t}^{\lambda}-S_{t})^{-}dt  \right )^{\frac{p}{2}} \right] \\
&\leq \sqrt{E \left[ \sup\limits_{0\leq t\leq T} \kappa_{t}^{p} \right]} 
\sqrt{E \left[ \left( \int_{0}^{T} (Y_{t}^{\lambda}-S_{t})^{-}dt \right)^{p} \right]}  \\
&\leq C\sqrt{E \left[ \left( \int_{0}^{T} (Y_{t}^{\lambda}-S_{t})^{-}dt \right)^{p} \right]}.
\end{aligned}
\end{equation*}
In addition, Lemma \ref{l2} and \ref{e2} indicate that 
\begin{equation*}
E \left[ \left( \lambda \int_{0}^{T} (Y_{s}^{\lambda}-S_{s})^{-}ds \right)^{p}
\right] \leq C.
\end{equation*} 
Then we deduce that
\begin{equation} \label{step2}
E \left[ \left( \int_{0}^{T} (Y_{t}^{\lambda}-S_{t})^{-}dK_{t}  \right )^{\frac{p}{2}} \right] \leq \frac{C}{\lambda^{\frac{p}{2}}}.
\end{equation}
\textbf{Step3} From Kazamaki \cite{KBMO}, there exist $p_{1}>1$ and $p_{2}>1$ such that 
\begin{equation} \label{step3}
E^{\lambda} \left[ \left( \frac{dP}{dP^{\lambda}} \right)^{p_{1}} \right]\leq C,
E^{\lambda} \left[ \left( \frac{dP^{\lambda}}{dP} \right)^{p_{2}} \right]\leq C.
\end{equation}
Therefore, from \eqref{step1}, \eqref{step2}, \eqref{step3} and H{\" o}lder's inequality, we deduce that for any $p \geq 2$,
\begin{align*}
&E \left[ \sup\limits_{0\leq t\leq T} \vert Y_{t}-Y_{t}^{\lambda} \vert^{p}  +  \left( \int_{0}^{T} \vert Z_{t}-Z_{t}^{\lambda} \vert^{2}dt \right)^{\frac{p}{2}} \right] \\
= & E^{\lambda} \left[ \left( \frac{dP}{dP^{\lambda}} \right)
\left( \sup\limits_{0\leq t\leq T} \vert Y_{t}-Y_{t}^{\lambda} \vert^{p}  +  \left( \int_{0}^{T} \vert Z_{t}-Z_{t}^{\lambda} \vert^{2}dt \right)^{\frac{p}{2}}  \right) \right] \\ 
\leq &  CE^{\lambda} \left[  \left( \frac{dP}{dP^{\lambda}} \right)^{p_{1}} \right]^{\frac{1}{p_{1}}}
  E^{\lambda} \left[ \left( \sup\limits_{0\leq t\leq T} \vert Y_{t}-Y_{t}^{\lambda} \vert^{pq_{1}}  +  \left( \int_{0}^{T} \vert Z_{t}-Z_{t}^{\lambda} \vert^{2}dt \right)^{\frac{pq_{1}}{2}} \right) \right]^{\frac{1}{q_{1}}} \\
\leq & C_{p}E^{\lambda} \left[ \left( \int_{0}^{T} (Y_{t}^{\lambda}-S_{t})^{-}dK_{t} 
\right )^{\frac{pq_{1}}{2}} \right]^{\frac{1}{q_{1}}} \\
\leq & CE \left[ \left( \frac{dP^{\lambda}}{dP} \right)^{p_{2}} \right]^{\frac{1}{q_{1}p_{2}}}
E \left[ \left( \int_{0}^{T} (Y_{t}^{\lambda}-S_{t})^{-}dK_{t} 
\right )^{\frac{pq_{1}q_{2}}{2}} \right]^{\frac{1}{q_{1}q_{2}}}\\
\leq & \left(\frac{C}{\lambda^{\frac{pq_{1}q_{2}}{2}}} \right)^{\frac{1}{q_{1}q_{2}}} \\
= & \frac{C}{\lambda^{\frac{p}{2}}}
\end{align*}
where $\frac{1}{p_{1}}+\frac{1}{q_{1}}=1,\frac{1}{p_{2}}+\frac{1}{q_{2}}=1$. Moreover, in \eqref{y-ylambda} we have
\begin{equation*}
K_{t}-K_{t}^{\lambda}=(Y_{0}-Y_{0}^{\lambda})-(Y_{t}-Y_{t}^{\lambda})-\int_{0}^{t} (f(s,Y_{s},Z_{s})-f(s,Y_{s}^{\lambda},Z_{s}))ds+\int_{0}^{t}(Z_{s}-Z_{s}^{\lambda}) dW_{s}^{\lambda},
\nonumber
\end{equation*}
which implies that
\begin{equation*}
E^{\lambda} \left[ \sup\limits_{0\leq t\leq T} \vert K_{t}-K_{t}^{\lambda} \vert^{p} \right] \leq C_{p} E^{\lambda} \left[ \sup\limits_{0\leq t\leq T} \vert Y_{t}-Y_{t}^{\lambda} \vert^{p}  +  \left( \int_{0}^{T} \vert Z_{t}-Z_{t}^{\lambda} \vert^{2}dt \right)^{\frac{p}{2}} \right].  \nonumber
\end{equation*}
Using the same BMO technique, we obtain that
\begin{equation*}
E \left[ \sup\limits_{0\leq t\leq T} \vert K_{t}-K_{t}^{\lambda} \vert^{p} \right] \leq \frac{C_{p}}{\lambda^{\frac{p}{2}}}.   \nonumber
\end{equation*}
\end{proof}

\section{Numerical approximation of reflected BSDEs with sub-quadratic generators} \label{label:Numerical approximation of RBSDEs with sub-quadratic generators}
In this section we consider the discretization error between reflected BSDE \eqref{e1} and its numerical solution using the estimation obtained from the previous section. Suppose the reflected BSDE satisfies assumptions $(A2)-(A4)$ and the following assumption:\par
$(A1^{'})$ The function $f:\Omega \times [0,T] \times \mathbb{R} \times \mathbb{R}^{d} \rightarrow \mathbb{R}$ satisfies that $f(\cdot,y,z)$ is adapted for each $y \in \mathbb{R}$ and $z \in \mathbb{R}^{d}$. It holds that
\begin{equation*}
\begin{aligned}
&\vert f(t,y,z) \vert \leq \overline{C}(1+ \vert y \vert + \vert z \vert^{2-\varepsilon}),\\
&\vert f(t,y_{1},z_{1})-f(t,y_{2},z_{2}) \vert \leq \overline{C} \vert y_{1}-y_{2} \vert 
+ \overline{C}(1+ \vert z_{1} \vert + \vert z_{2} \vert)\vert z_{1}-z_{2} \vert
\end{aligned} \nonumber
\end{equation*}
for $y,y_{1},y_{2} \in \mathbb{R}$, $z,z_{1},z_{2} \in \mathbb{R}^{d}$, positive constant $\overline{C}$ and $\varepsilon \in (0,2]$.\par
We firstly construct numerical scheme for penalized BSDE \eqref{e2} which satisfies $(A1^{'})$, $(A2)$, $(A3)$ through extending the Euler’s polygonal line method proposed by \cite{LT}. Define
\begin{equation} \label{num-PBSDE}
Y_{t}^{n,i,\lambda}=Y_{T_{n-i+1}}^{n,i-1,\lambda}+\int_{t}^{T_{n-i+1}} f^{\lambda} \left( s,
E^{\mathcal{F}_{s}} \left[ Y_{T_{n-i+1}}^{n,i-1,\lambda} \right],Z_{s}^{n,i,\lambda} \right)ds-\int_{t}^{T_{n-i+1}} 
Z_{s}^{n,i,\lambda} dW_{s}
\end{equation}
where $f^{\lambda}(t,y,z)=f(t,y,z)+\lambda(y-S_{t})^{-}$, $t\in[T_{n-i},T_{n-i+1}]$, $T_{i}:=\frac{i}{n}T$, $Y_{T}^{n,0,\lambda}=\xi$. The global solution for $t \in [0,T]$ has the form
\begin{equation*}
\begin{aligned}
Y_{t}^{n,\lambda}&:=\sum\limits_{i=1}^{n}Y_{t}^{n,i,\lambda} \cdot \mathbb{I}_{[T_{n-i},T_{n-i+1})}(t) + \xi \cdot \mathbb{I}_{\{T\}}(t),\\
Z_{t}^{n,\lambda}&:=\sum\limits_{i=1}^{n}Z_{t}^{n,i,\lambda} \cdot \mathbb{I}_{[T_{n-i},T_{n-i+1})}(t) + Z_{T}^{n,1,\lambda} \cdot \mathbb{I}_{\{T\}}(t),
\end{aligned}
\end{equation*}
which can be expressed as  the solution of such BSDE

\begin{equation} \label{e3}
Y_{t}^{n,\lambda}=\xi+\int_{t}^{T}f^{\lambda}\left(s,E^{\mathcal{F}_{s}}\left[Y_{\phi(n,s)}^{n,\lambda}\right],Z_{s}^{n,\lambda} \right)ds-\int_{t}^{T}Z_{s}^{n,\lambda}dW_{s},
\end{equation}
where $\phi(n,t):=\frac{[\frac{nt}{T}]+1}{n}T$ and  $\phi(n,T):=T$. \par


Setting
\begin{equation*}
\mathcal{S}^{\infty,i}(\mathbb{R}):=\mathcal{S}^{\infty}_{[T_{n-i},T_{n-i+1}]}(\mathbb{R}),\quad \mathcal{H}^{p,i}(\mathbb{R}^{d}):=\mathcal{H}^{p}_{[T_{n-i},T_{n-i+1}]}(\mathbb{R}^{d}),
\end{equation*}
we recall the following Lemma from \cite{LT}.
\begin{lemma} \label{li_lemma}
Let assumptions $(A1^{'}),~(A2)$ and $(A3)$ be satisfied. Then for $n \geq 1$ and $i=1,\dots,n$, BSDE \eqref{num-PBSDE} has a unique solution $(Y_{t}^{n,i,\lambda},Z_{t}^{n,i,\lambda}) \in \mathcal{S}^{\infty,i}(\mathbb{R}) \times \mathcal{H}^{2,i}(\mathbb{R}^{d})$ and \eqref{e3} has a unique solution $(Y_{t}^{n,\lambda},Z_{t}^{n,\lambda}) \in \mathcal{S}^{\infty}(\mathbb{R}) \times BMO$ and
\begin{equation}\label{e4}
\Vert Y^{n,\lambda}-Y^{\lambda} \Vert_{\mathcal{S}^{\infty}(R)}+\Vert Z^{n,\lambda}-Z^{\lambda} \Vert_{BMO} \leq \frac{C_{\lambda}}{n^{\frac{\varepsilon}{4}}}.
\end{equation} 
where $C_{\lambda}$ depends on $M_{\lambda}:=\sup \limits_{n\in \mathbb{N}} \left\{ \Vert Y^{n,\lambda} \Vert_{\mathcal{S}^{\infty}(R)} \right\}$ and $N_{\lambda}:=\sup \limits_{n\in \mathbb{N}} \left\{ \Vert Z^{n,\lambda}\cdot W \Vert_{BMO} \right\}$.
\end{lemma}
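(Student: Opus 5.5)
The plan is to follow the Euler polygonal line analysis of \cite{LT}, applied to the penalized generator $f^{\lambda}(t,y,z)=f(t,y,z)+\lambda(y-S_t)^{-}$ with $\lambda$ fixed. First check that $f^{\lambda}$ satisfies the structural hypotheses required there. It is Lipschitz in $y$ with constant $\overline{C}+\lambda$, has the same local Lipschitz modulus in $z$ as $f$, and grows like $\overline{C}(1+|y|+|z|^{2-\varepsilon})+\lambda(|y|+S_t^{-})$. The bound on $S$ in $(A2)$ only controls $S^{+}$, but $(y-S_t)^{-}\le (y-S_t^{+})^{-}\le |y|+\|S^{+}\|_{\infty}$, so the growth constant is $C(1+\lambda)$ and the terminal value is bounded by $(A3)$.

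On each interval $[T_{n-i},T_{n-i+1}]$, equation \eqref{num-PBSDE} is a standard quadratic BSDE. Its generator $z\mapsto f^{\lambda}(s,E^{\mathcal F_s}[Y^{n,i-1,\lambda}_{T_{n-i+1}}],z)$ does not depend on the unknown $Y$, and by induction its terminal value is bounded. Kobylanski's theorem \cite{K} therefore gives a unique solution in $\mathcal{S}^{\infty,i}\times\mathcal{H}^{2,i}$. Concatenating the pieces gives a solution of \eqref{e3}, and uniqueness of \eqref{e3} follows from uniqueness on each piece, proceeding backward in time.

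Next I need a priori bounds uniform in $n$, namely $M_\lambda,N_\lambda<\infty$. For $Y^{n,\lambda}$, I apply Itô's formula to an exponential transform, or use a comparison with the ODE $\dot\psi=-C(1+\lambda)(1+\psi)$. The frozen argument $E^{\mathcal F_s}[Y_{\phi(n,s)}]$ is bounded by $\sup_{u\ge s}\|Y_u\|_\infty$, so a Gronwall argument in the backward direction yields $\|Y^{n,\lambda}\|_{\mathcal S^\infty}\le C_\lambda$, uniformly in $n$. The BMO bound then follows in the standard way: apply Itô's formula to $e^{\gamma Y}$ with $\gamma$ large, which is possible because $|z|^{2-\varepsilon}\le 1+|z|^2$.

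For the error estimate \eqref{e4}, set $\delta Y=Y^{n,\lambda}-Y^{\lambda}$ and $\delta Z=Z^{n,\lambda}-Z^{\lambda}$. Linearize the $z$-difference as $\beta_s\,\delta Z_s$ with $|\beta|\le \overline C(1+|Z^{n,\lambda}|+|Z^{\lambda}|)$, which is BMO with norm controlled by $N_\lambda$ and by Lemma \ref{l2}. Then change to the Girsanov measure $Q$. The remaining driver error is bounded by $(\overline C+\lambda)\big(|E^{\mathcal F_s}[Y^{n,\lambda}_{\phi(n,s)}]-Y^{n,\lambda}_s|+|\delta Y_s|\big)$. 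The term $|\delta Y_s|$ is absorbed by Gronwall. The main obstacle is the time-regularity term $E^{\mathcal F_s}[Y^{n,\lambda}_{\phi(n,s)}]-Y^{n,\lambda}_s$, which equals $-E^{\mathcal F_s}\int_s^{\phi(n,s)}f^{\lambda}\,dr$. It is bounded by $C_\lambda\, E^{\mathcal F_s}\int_s^{\phi(n,s)}(1+|Z^{n,\lambda}_r|^{2-\varepsilon})dr$. By Hölder's inequality with exponents $\frac{2}{2-\varepsilon}$ and $\frac{2}{\varepsilon}$, together with the BMO bound, this is at most $C_\lambda n^{-\varepsilon/2}$ in conditional expectation. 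The difficulty is that this bound holds only after conditioning, not pointwise, and it must be propagated under $Q$ via the reverse Hölder inequality for $\mathcal E(\beta\cdot W)$ \cite{KBMO}. The loss of exponent in that step is why the final rate is $n^{-\varepsilon/4}$ rather than $n^{-\varepsilon/2}$. Once $\|\delta Y\|_{\mathcal S^\infty}$ is controlled, the BMO bound on $\delta Z$ follows from Itô's formula applied to $|\delta Y|^2$ between a stopping time and $T$. That step uses the $\sqrt{\cdot}$ of the $\delta Y$ bound, again producing the order $\varepsilon/4$. All constants depend only on $\lambda$, $M_\lambda$, $N_\lambda$ and the data.
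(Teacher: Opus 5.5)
Your overall route is the one the paper relies on. The paper gives no argument of its own; it invokes \cite{LT} with $f$ replaced by $f^{\lambda}$. Your checks of the hypotheses for $f^{\lambda}$ (via $(y-S_t)^-\le |y|+\Vert S^+\Vert_{\mathcal S^\infty}$), the piecewise well-posedness, the $n$-uniform bounds $M_\lambda,N_\lambda<\infty$ and the linearization/Girsanov step are all fine. The problem is your last paragraph. You misidentify the obstacle, and the rate bookkeeping you give does not actually produce \eqref{e4}.

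The time-regularity term $p_s:=E^{\mathcal F_s}[Y^{n,\lambda}_{\phi(n,s)}]-Y^{n,\lambda}_s=-E^{\mathcal F_s}\big[\int_s^{\phi(n,s)}f^{\lambda}\,dr\big]$ is itself an $\mathcal F_s$-conditional expectation. So your H\"older--Jensen--BMO computation bounds it almost surely:
\[
|p_s|\le C(1+\lambda)(1+M_\lambda)\tfrac{T}{n}+\overline{C}\,N_\lambda^{2-\varepsilon}\big(\tfrac{T}{n}\big)^{\varepsilon/2}.
\]
This is exactly the paper's Lemma~\ref{lA1}. Since it is an a.s.\ bound, it also holds under the equivalent measure $Q$. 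Hence $E^{Q}\big[\int_t^T|p_s|\,ds\,\big|\,\mathcal F_t\big]\le T\sup_s\Vert p_s\Vert_\infty$, and no reverse H\"older inequality is needed. Combined with your Gronwall step, this gives $\Vert\delta Y\Vert_{\mathcal S^\infty}\le C_\lambda n^{-\varepsilon/2}$.

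The exponent $\varepsilon/4$ comes only from the $Z$-step. It\^o's formula for $|\delta Y|^2$, with $\delta Y_T=0$, gives $E^{\mathcal F_\tau}\int_\tau^T|\delta Z_s|^2ds\le 2\Vert\delta Y\Vert_{\mathcal S^\infty}E^{\mathcal F_\tau}\int_\tau^T\big(|f^{\lambda}(s,E^{\mathcal F_s}[Y^{n,\lambda}_{\phi(n,s)}],Z^{n,\lambda}_s)|+|f^{\lambda}(s,Y^{\lambda}_s,Z^{\lambda}_s)|\big)ds\le C_\lambda\Vert\delta Y\Vert_{\mathcal S^\infty}$. Hence $\Vert\delta Z\cdot W\Vert_{BMO}\le C_\lambda n^{-\varepsilon/4}$.

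In your version the two losses compound. Suppose a reverse-H\"older step had already degraded the $Y$-bound to $n^{-\varepsilon/4}$. Then the square-root step would leave only $n^{-\varepsilon/8}$ for $\delta Z$, and \eqref{e4} would not follow. In any case, reverse H\"older gives an exponent $1/q$ with $q$ depending on $\Vert\beta\cdot W\Vert_{BMO}$, not a clean factor $2$. Drop the reverse-H\"older step and the claim that the bound holds ``only after conditioning''.
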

\begin{remark}
The uniform bound $M_{\lambda} \sim e^{\lambda}$ if we directly replace $f$ to $f_{\lambda}$ in the proof of \cite[Theorem 3.1]{LT}, which leads to the global discretization error has the form $\left( \frac{1}{lnn}\right)^{C_{\varepsilon}}$.
\end{remark}
We now provide a more delicate estimate for $M_{\lambda}$ if the following assumption is satisfied.\\
$(H)$: The generator $f$ is non-decreasing with respect to $y$, i.e., $\forall (t,z) \in [0,T]\times\mathbb{R}^{d}$ and $y_1,y_2\in\mathbb{R}$ with $y_1\leq y_2$, $f(t, y_1,z) \leq f(t, y_2,z)$.

\begin{lemma} \label{boundary_lemma}
Let assumptions $(A1^{'})$, $(A2)$, $(A3)$ and $(H)$ be satisfied, then for numerical scheme \eqref{e3} we have
\begin{equation*}
M_{\lambda} \leq C\lambda,\quad N_{\lambda}\leq C\lambda^{max\{2,\frac{2}{\epsilon}\}}.
\end{equation*}
\begin{proof}
For any $i=1,\dots,n$ and $t\in[T_{n-i},T_{n-i+1}]$, let $(Y_{t}^{n,i,0},Z_{t}^{n,i,0})$ be the unique solution of 
\begin{equation*}
Y_{t}^{n,i,0}=Y_{T_{n-i+1}}^{n,i-1,0}+\int_{t}^{T_{n-i+1}} f \left( s,
E^{\mathcal{F}_{s}} \left[ Y_{T_{n-i+1}}^{n,i-1,0} \right],Z_{s}^{n,i,0} \right)ds-\int_{t}^{T} 
Z_{s}^{n,i,0} dW_{s}
\end{equation*}
where $Y_{T}^{n,0,0}=\xi$. From  \cite[Lemma 2.1]{HT}, we have $ Y_{t}^{n,1,\lambda} \geq Y_{t}^{n,1,0}$ in $[T_{n-1},T_{n}]$, which implies that
\begin{equation*}
f\left( s,E^{\mathcal{F}_{s}} \left[ Y_{T_{n-1}}^{n,1,0} \right],z \right) \leq f\left( s,E^{\mathcal{F}_{s}} \left[ Y_{T_{n-1}}^{n,1,\lambda} \right],z \right) \leq f^{\lambda}\left( s,E^{\mathcal{F}_{s}} \left[ Y_{T_{n-1}}^{n,1,\lambda} \right],z \right).
\end{equation*}
Thus, we can deduce inductively that $Y_{t}^{n,2,\lambda} \geq Y_{t}^{n,2,0}$ in $[T_{n-2},T_{n-1}]$ and $Y_{t}^{n,\lambda} \geq Y_{t}^{n,0}$ in $[0,T]$. Note that $Y_{t}^{n,0}$ is independent of $\lambda$, and there exists 
\begin{equation*}
K:=\sup \limits_{n\in \mathbb{N}} \left\{ \Vert Y^{n,0} \Vert_{\mathcal{S}^{\infty}(R)} \right\} \leq C
\end{equation*}
from \cite{LT}, which implies that $Y_{t}^{n,\lambda} \geq -K$.\par
On the other hand, let $(\hat{Y}_{t}^{n,i,\lambda},\hat{Z}_{t}^{n,i,\lambda})$ be the unique solution of
\begin{equation*}
\hat{Y}_{t}^{n,i,\lambda}=\hat{Y}_{T_{n-i+1}}^{n,i-1,\lambda}+\int_{t}^{T_{n-i+1}} g^{\lambda} \left( s,
E^{\mathcal{F}_{s}} \left[ \hat{Y}_{T_{n-i+1}}^{n,i-1,\lambda} \right],\hat{Z}_{s}^{n,i,\lambda} \right)ds-\int_{t}^{T_{n-i+1}} 
\hat{Z}_{s}^{n,i,\lambda} dW_{s}
\end{equation*}
where $g^{\lambda}(s,y,z)=f(s,y,z)+\lambda(K+S_{s}^{+})$ and $\hat{Y}_{T}^{n,0,\lambda}=\xi$. The comparison theorem implies that $Y_{t}^{n,1,\lambda} \leq \hat{Y}_{t}^{n,1,\lambda}$. We also have
\begin{equation*}
\begin{aligned}
f^{\lambda}\left( s,E^{\mathcal{F}_{s}} \left[ Y_{T_{n-1}}^{n,1,\lambda} \right],z \right) 
&\leq f \left( t,E^{\mathcal{F}_{s}} \left[ Y_{T_{n-1}}^{n,1,\lambda} \right],z \right) + \lambda \left( E^{\mathcal{F}_{s}} \left[ Y_{T_{n-1}}^{n,1,\lambda} \right]^{-}+S_{s}^{+} \right) \\
&\leq g^{\lambda} \left( s,E^{\mathcal{F}_{s}} \left[ \hat{Y}_{T_{n-1}}^{n,1,\lambda} \right],z \right).
\end{aligned}
\end{equation*}
Then using the same technique we have $Y_{t}^{n,2,\lambda} \leq \hat{Y}_{t}^{n,2,\lambda}$ in $[T_{n-2},T_{n-1}]$ and $Y_{t}^{n,\lambda} \leq \hat{Y}_{t}^{n,\lambda}$ in $[0,T]$. 
Using the inequality $\vert z \vert^{2-\varepsilon} \leq 1+\vert z \vert^{2}$ we have
\begin{equation*}
\begin{aligned}
g^{\lambda} \left( s,
E^{\mathcal{F}_{s}} \left[ \hat{Y}_{T_{n-i+1}}^{n,i-1,\lambda} \right],z \right)
&\leq \overline{C} \left( 1+\Vert \hat{Y}^{n,i-1,\lambda} \Vert_{\mathcal{S}^{\infty,i-1}(\mathbb{R})} + \vert z \vert^{2-\varepsilon} \right) + \lambda \left( K+\Vert S^{+} \Vert_{\mathcal{S}^{\infty}(\mathbb{R})} \right)\\
&\leq \left( 2\overline{C}+\lambda K+\lambda \Vert S^{+} \Vert_{\mathcal{S}^{\infty}(\mathbb{R})} \right) + \overline{C} \Vert \hat{Y}^{n,i-1,\lambda} \Vert_{\mathcal{S}^{\infty,i-1}(\mathbb{R})} + \overline{C} \vert z \vert^{2} \\
&= \alpha_{\lambda}+\overline{C} \Vert \hat{Y}^{n,i-1,\lambda} \Vert_{\mathcal{S}^{\infty,i-1}(\mathbb{R})} + \overline{C} \vert z \vert^{2}.
\end{aligned}
\end{equation*}
Then from \cite[Lemma 2.1]{HT}, we have
\begin{equation*}
\begin{aligned}
\Vert \hat{Y}^{n,i,\lambda} \Vert_{\mathcal{S}^{\infty,i}(\mathbb{R})}
\leq &\alpha_{\lambda}\frac{T}{n} + \left( 1+\frac{\overline{C}T}{n} \right) \Vert \hat{Y}^{n,i-1,\lambda} \Vert_{\mathcal{S}^{\infty,i-1}(\mathbb{R})}\\
\leq &\alpha_{\lambda} \frac{T}{n} + \left( 1+\frac{\overline{C}T}{n} \right)
\left[  \alpha_{\lambda}\frac{T}{n} + \left( 1+\frac{\overline{C}T}{n} \right) \Vert \hat{Y}^{n,i-2,\lambda} \Vert_{\mathcal{S}^{\infty,i-2}(\mathbb{R})} \right]\\
= &\alpha_{\lambda} \frac{T}{n} \left[ 1+\left( 1+\frac{\overline{C}T}{n} \right) \right] + \left( 1+\frac{\overline{C}T}{n} \right)^{2}\Vert \hat{Y}^{n,i-2,\lambda} \Vert_{\mathcal{S}^{\infty,i-2}(\mathbb{R})}\\
\leq &\alpha_{\lambda} \frac{T}{n} \left[ 1+\left( 1+\frac{\overline{C}T}{n} \right)+\left( 1+\frac{\overline{C}T}{n} \right)^{2}+\cdots+ \left( 1+\frac{\overline{C}T}{n} \right)^{i-1}\right]+\left( 1+\frac{\overline{C}T}{n} \right)^{i} \Vert \xi \Vert_{\infty} \\
\leq &\frac{\alpha_{\lambda}}{\overline{C}}\left[ \left( 1+\frac{\overline{C}T}{n} \right)^{n}-1 \right]+\left( 1+\frac{\overline{C}T}{n} \right)^{n} \Vert \xi \Vert_{\infty} \\
\leq&\frac{\alpha_{\lambda}}{\overline{C}}\left( e^{\overline{C}T}-1 \right) +
e^{\overline{C}}\Vert \xi \Vert_{\infty},
\end{aligned}
\end{equation*}
which means that
\begin{equation*}
\sup \limits_{n\in \mathbb{N}} \left\{ \Vert \hat{Y}^{n,\lambda} \Vert_{\mathcal{S}^{\infty}(R)} \right\} \leq C\lambda
\end{equation*}
and $-K \leq M_{\lambda} \leq C\lambda.$\par
For any bounded stopping time $\tau$, applying It\^{o}'s formula to $\vert Y^{n,\lambda} \vert^{2}$ yields that
\begin{equation*}
\begin{aligned}
&\vert Y_{\tau}^{n,\lambda} \vert^{2}+E^{\mathcal{F}_{\tau}}\left[ \int_{\tau}^{T} \vert Z_{t}^{n,\lambda} \vert^{2}dt \right]\\
=&E^{\mathcal{F}_{\tau}}\left[ \vert \xi \vert^2 \right]+2E^{\mathcal{F}_{\tau}}\left[ \int_{\tau}^{T} Y_{t}^{n,\lambda} f^{\lambda}\left(t,E^{\mathcal{F}_{t}}\left[Y_{\phi(n,t)}^{n,\lambda}\right],Z_{t}^{n,\lambda} \right)dt \right]\\
\leq& \Vert \xi \Vert_{\infty}+2E^{\mathcal{F}_{\tau}}\left[ \int_{\tau}^{T} \vert Y_{t}^{n,\lambda} \vert \left( \overline{C}+ \overline{C}M_{\lambda}+\overline{C}\vert Z_{t}^{n,\lambda} \vert^{2-\varepsilon} +\lambda K +\lambda \Vert S^{+} \Vert_{\mathcal{S}^{\infty}(\mathbb{R})} \right)dt \right]\\
=&\Vert \xi \Vert_{\infty}+2E^{\mathcal{F}_{\tau}}\left[ \int_{\tau}^{T} \vert Y_{t}^{n,\lambda} \vert \left( C+ C\lambda +C\vert Z_{t}^{n,\lambda} \vert^{2-\varepsilon}  \right)dt \right].
\end{aligned}
\end{equation*}
Using Young's equality we have $2C\vert Y_{t}^{n,\lambda} \vert\vert Z_{t}^{n,\lambda} \vert^{2-\varepsilon}\leq \frac{\varepsilon}{2} \left( 2C \vert Y_{t}^{n,\lambda} \vert \right)^{\frac{2}{\varepsilon}}+\frac{2-\varepsilon}{2}\vert Z_{t}^{n,\lambda} \vert^{2}$ and thus
\begin{equation*}
E^{\mathcal{F}_{\tau}}\left[ \int_{\tau}^{T} \vert Z_{t}^{n,\lambda} \vert^{2}dt \right]\leq C+C\lambda+C\lambda^{2}+C\lambda^{\frac{2}{\epsilon}}.
\end{equation*}
Therefore we have 
\begin{equation*}
\sup \limits_{n\in \mathbb{N}} \left\{ \Vert Z^{n,\lambda}\cdot W \Vert_{BMO} \right\} \leq C\lambda^{max\{2,\frac{2}{\epsilon}\}}.
\end{equation*}
\end{proof}
\end{lemma}\par
Now we are ready for the following theorem.
\begin{theorem} \label{th2}
Let assumptions $(A1^{'})$, $(A2)-(A4)$ and $(H)$ be satisfied, then for $p \geq 2$,
\begin{equation}
\Vert Y^{n,\lambda}-Y \Vert_{\mathcal{S}^{p}(\mathbb{R})} \leq C_p\sqrt{\frac{\lambda^{max\{8-4\varepsilon,\frac{8}{\epsilon}-4\}}}{n^{\varepsilon+1}}+\frac{\lambda^{2}}{n}+\frac{1}{\lambda}}.
\end{equation}
\begin{proof}
For any $i=1,\dots,n$, applying It\^{o}'s formula to $e^{2\overline{C}t}\vert Y^{n,i,\lambda}_{t}-Y^{\lambda}_{t} \vert^{2}$ in the interval $[T_{n-i},T_{n-i+1}]$ yields that
\begin{equation*}
\begin{aligned}
&e^{2\overline{C}t}\vert Y^{n,i,\lambda}_{t}-Y^{\lambda}_{t} \vert^{2}+\int_{t}^{T_{n-i+1}}2e^{2\overline{C}s}(Y^{n,i,\lambda}_{s}-Y^{\lambda}_{s})(Z^{n,i,\lambda}_{s}-Z^{\lambda}_{s})dW_{s} \\
&+\int_{t}^{T_{n-i+1}}e^{2\overline{C}s}\vert Z^{n,i,\lambda}_{s}-Z^{\lambda}_{s} \vert^{2}ds \\
=&e^{2\overline{C}T_{n-i+1}}\vert Y^{n,i,\lambda}_{T_{n-i+1}}-Y^{\lambda}_{T_{n-i+1}} \vert^{2} - \int_{t}^{T_{n-i+1}}2\overline{C}e^{2\overline{C}s}\vert Y^{n,i,\lambda}_{s}-Y^{\lambda}_{s} \vert^{2}ds \\
&+\int_{t}^{T_{n-i+1}}2e^{2\overline{C}s}(Y^{n,i,\lambda}_{s}-Y^{\lambda}_{s})\left[ f^{\lambda}(s,E^{\mathcal{F}_{s}} [ Y_{T_{n-i+1}}^{n,i-1,\lambda} ],Z_{s}^{n,i,\lambda})-f^{\lambda}(s,Y_{s}^{\lambda},Z_{s}^{\lambda}) \right]ds\\
\leq&e^{2\overline{C}T_{n-i+1}}\vert Y^{n,i,\lambda}_{T_{n-i+1}}-Y^{\lambda}_{T_{n-i+1}} \vert^{2} - \int_{t}^{T_{n-i+1}}2\overline{C}e^{2\overline{C}s}\vert Y^{n,i,\lambda}_{s}-Y^{\lambda}_{s} \vert^{2}ds \\
&+\int_{t}^{T_{n-i+1}}2e^{2\overline{C}s}(Y^{n,i,\lambda}_{s}-Y^{\lambda}_{s})\left[ f^{\lambda}(s,E^{\mathcal{F}_{s}} [ Y_{T_{n-i+1}}^{n,i-1,\lambda} ],Z_{s}^{n,i,\lambda})-f^{\lambda}(s,Y_{s}^{\lambda},Z_{s}^{n,i,\lambda}) \right]ds\\
&+\int_{t}^{T_{n-i+1}}2e^{2\overline{C}s}(Y^{n,i,\lambda}_{s}-Y^{\lambda}_{s})\left[ f^{\lambda}(s,Y_{s}^{\lambda},Z_{s}^{n,i,\lambda})-f^{\lambda}(s,Y_{s}^{\lambda},Z_{s}^{\lambda}) \right]ds.
\end{aligned}
\end{equation*}
We write 
\begin{equation*}
f^{\lambda}(t,Y_{t}^{\lambda},Z_{t}^{n,\lambda})-f^{\lambda}(t,Y_{t}^{\lambda},Z_{t}^{\lambda})=\beta_{t}^{n,\lambda}(Z^{n,\lambda}_{t}-Z^{\lambda}_{t}), t \in [0,T].
\end{equation*}
Then we have $\vert \beta_{t}^{n,\lambda} \vert \leq \overline{C}(1+\vert Z^{n,\lambda}_{t} \vert+\vert Z^{\lambda}_{t} \vert)$ and  
\begin{equation*}
\sup \limits_{n\in \mathbb{N}} \left\{ \Vert \beta^{n,\lambda}\cdot W \Vert_{BMO} \right\} \leq C_{\lambda}
\end{equation*}
from Lemma \ref{l2} and \ref{boundary_lemma}. We define $W_{t}^{n,\lambda}=W_{t}-\int_{0}^{s}\beta_{s}^{n,\lambda}ds$ under an equivalent probability measure $dP^{n,\lambda}=\mathcal{E}(\beta_{s}^{n,\lambda}\cdot W)_{0}^{T}dP$. The inequality can be written as 
\begin{equation*}
\begin{aligned}
&e^{2\overline{C}t}\vert Y^{n,i,\lambda}_{t}-Y^{\lambda}_{t} \vert^{2}+\int_{t}^{T_{n-i+1}}2e^{2\overline{C}s}(Y^{n,i,\lambda}_{s}-Y^{\lambda}_{s})(Z^{n,i,\lambda}_{s}-Z^{\lambda}_{s})dW_{s}^{n,\lambda} \\
&+\int_{t}^{T_{n-i+1}}e^{2\overline{C}s}\vert Z^{n,i,\lambda}_{s}-Z^{\lambda}_{s} \vert^{2}ds \\
\leq & e^{2\overline{C}T_{n-i+1}}\vert Y^{n,i,\lambda}_{T_{n-i+1}}-Y^{\lambda}_{T_{n-i+1}} \vert^{2} - \int_{t}^{T_{n-i+1}}2\overline{C}e^{2\overline{C}s}\vert Y^{n,i,\lambda}_{s}-Y^{\lambda}_{s} \vert^{2}ds \\
&+\int_{t}^{T_{n-i+1}}2e^{2\overline{C}s}(Y^{n,i,\lambda}_{s}-Y^{\lambda}_{s})\left[ f^{\lambda}(s,E^{\mathcal{F}_{s}} [ Y_{T_{n-i+1}}^{n,i-1,\lambda} ],Z_{s}^{n,i,\lambda})-f^{\lambda}(s,Y_{s}^{\lambda},Z_{s}^{n,i,\lambda}) \right]ds\\
\leq & e^{2\overline{C}T_{n-i+1}}\vert Y^{n,i,\lambda}_{T_{n-i+1}}-Y^{\lambda}_{T_{n-i+1}} \vert^{2}+ \int_{t}^{T_{n-i+1}}2e^{2\overline{C}s} \vert Y^{n,i,\lambda}_{s}-Y^{\lambda}_{s} \vert \vert E^{\mathcal{F}_{s}} [ Y_{T_{n-i+1}}^{n,i-1,\lambda} ]-Y^{n,i,\lambda}_{s}\vert ds \\
&+\int_{t}^{T_{n-i+1}}2e^{2\overline{C}s} \vert Y^{n,i,\lambda}_{s}-Y^{\lambda}_{s} \vert \vert \lambda (E^{\mathcal{F}_{s}} [ Y_{T_{n-i+1}}^{n,i-1,\lambda} ]-S_{s})^{-}-\lambda(Y^{\lambda}_{s}-S_{s})^{-}\vert ds
\end{aligned}
\end{equation*}
We define
\begin{equation*}
\begin{aligned}
&p_{t}^{n,i,\lambda}=E^{\mathcal{F}_{t}} [ Y_{T_{n-i+1}}^{n,i-1,\lambda} ]-Y^{n,i,\lambda}_{t}, \\
&q_{t}^{n,i,\lambda}=\lambda (E^{\mathcal{F}_{t}} [ Y_{T_{n-i+1}}^{n,i-1,\lambda} ]-S_{t})^{-}-\lambda(Y^{\lambda}_{t}-S_{t})^{-}.
\end{aligned}
\end{equation*}
Note that $p_{t}^{n,i,\lambda}$ and $q_{t}^{n,i,\lambda}$ are bounded by $P^{n,\lambda}$ and $Q^{n,\lambda}$ respectively from Lemma \ref{lA1} , then we have
\begin{equation*}
\begin{aligned}
&\vert Y^{n,i,\lambda}_{t}-Y^{\lambda}_{t} \vert^{2}+\int_{t}^{T_{n-i+1}}2e^{2\overline{C}(s-t)}(Y^{n,i,\lambda}_{s}-Y^{\lambda}_{s})(Z^{n,i,\lambda}_{s}-Z^{\lambda}_{s})dW_{s}^{n,\lambda} \\
&+\int_{t}^{T_{n-i+1}}e^{2\overline{C}(s-t)}\vert Z^{n,i,\lambda}_{s}-Z^{\lambda}_{s} \vert^{2}ds \\
\leq & e^{2\overline{C}\frac{T}{n}}\vert Y^{n,i,\lambda}_{T_{n-i+1}}-Y^{\lambda}_{T_{n-i+1}} \vert^{2}+ P^{n,\lambda}\int_{t}^{T_{n-i+1}}2e^{2\overline{C}(s-t)} \vert Y^{n,i,\lambda}_{s}-Y^{\lambda}_{s} \vert ds \\
&+Q^{n,\lambda}\int_{t}^{T_{n-i+1}}2e^{2\overline{C}(s-t)} \vert Y^{n,i,\lambda}_{s}-Y^{\lambda}_{s} \vert ds \\
\leq & e^{2\overline{C}\frac{T}{n}}\vert Y^{n,i,\lambda}_{T_{n-i+1}}-Y^{\lambda}_{T_{n-i+1}} \vert^{2} + (e^{2\overline{C}\frac{T}{n}}P^{n,\lambda})^{2}\frac{2T^{2}}{n}+(e^{2\overline{C}\frac{T}{n}}Q^{n,\lambda})^{2}\frac{2T^{2}}{n}\\
&+ \frac{n}{T^{2}}\left(\int_{t}^{T_{n-i+1}}\vert Y^{n,i,\lambda}_{s}-Y^{\lambda}_{s} \vert ds \right)^{2}
\end{aligned}
\end{equation*}
Taking conditional expectation with respect to $\mathcal{F}_{t}$ and $E^{n,\lambda}$, we have
\begin{align*}
\Vert Y^{n,i,\lambda}_{t}-Y^{\lambda}_{t} \Vert_{\mathcal{S}^{\infty,i}(\mathbb{R})}^{2} 
\leq&  \frac{e^{\frac{C}{n}}}{1-\frac{1}{n}}\Vert Y^{n,i-1,\lambda}_{t}-Y^{\lambda}_{t} \Vert_{\mathcal{S}^{\infty,i-1}(\mathbb{R})}^{2}+\frac{C}{n} \left( (P^{n,\lambda})^{2}+(Q^{n,\lambda})^{2} \right)\\
\leq&  \left( \frac{e^{\frac{C}{n}}}{1-\frac{1}{n}} \right)^{k}\Vert Y^{n,i-k,\lambda}_{t}-Y^{\lambda}_{t} \Vert_{\mathcal{S}^{\infty,i-k}(\mathbb{R})}^{2}\\
&+\frac{C}{n}\left[ 1+\frac{e^{\frac{C}{n}}}{1-\frac{1}{n}}+\cdots+\left(\frac{e^{\frac{C}{n}}}{1-\frac{1}{n}} \right)^{k-1} \right]\left( (P^{n,\lambda})^{2}+(Q^{n,\lambda})^{2} \right)\\
\leq&  \frac{C}{n}\left[ \left( \frac{e^{\frac{C}{n}}}{1-\frac{1}{n}} \right)^{n}-1 \right]\left( (P^{n,\lambda})^{2}+(Q^{n,\lambda})^{2} \right) \\
=&  \frac{C}{n} \left( (P^{n,\lambda})^{2}+(Q^{n,\lambda})^{2} \right)
\end{align*}
Thus we have 
\begin{equation*}
\Vert Y^{n,\lambda}_{t}-Y^{\lambda}_{t} \Vert_{\mathcal{S}^{\infty}(\mathbb{R})}^{2} \leq C(\frac{\lambda^{max\{8-4\varepsilon,\frac{8}{\epsilon}-4\}}}{n^{\varepsilon+1}}+\frac{\lambda^{2}}{n})
\end{equation*}
and 
\begin{equation*}
\Vert Y^{n,\lambda}-Y \Vert_{\mathcal{S}^{p}(\mathbb{R})} \leq C_p\sqrt{(\frac{\lambda^{max\{8-4\varepsilon,\frac{8}{\epsilon}-4\}}}{n^{\varepsilon+1}}+\frac{\lambda^{2}}{n}+\frac{1}{\lambda})}
\end{equation*}
from Theorem \ref{th1}.
\end{proof}
\end{theorem}

\appendix
\section{Appendix}
In this appendix, we provide some estimates for the proof of Theorem \ref{th2}.
\begin{lemma}\label{lA1}
Under the assumptions of Theorem \ref{th2}, define
\begin{equation*}
\begin{aligned}
&p_{t}^{n,\lambda}=E^{\mathcal{F}_{t}}[Y_{\phi(n,t)}^{n,\lambda}]-Y^{n,\lambda}_{t}, \\
&q_{t}^{n,\lambda}=\lambda (E^{\mathcal{F}_{t}}[Y_{\phi(n,t)}^{n,\lambda}]-S_{t})^{-}-\lambda(Y^{\lambda}_{t}-S_{t})^{-}.
\end{aligned}
\end{equation*}
Then $\vert p_{t}^{n,\lambda} \vert \leq C(1+\lambda)\frac{1}{n}+C\lambda^{max\{4-2\varepsilon,\frac{4}{\epsilon}-2\}}\left(\frac{1}{n} \right)^{\frac{\varepsilon}{2}}:=P^{n,\lambda}$, $\vert q_{t}^{n,\lambda} \vert \leq C\lambda :=Q^{n,\lambda}$.
\begin{proof}
From \eqref{e3} we have
\begin{equation*}
p_{t}^{n,\lambda}=-E^{\mathcal{F}_{t}} \left[ \int_{t}^{\phi(n,t)}  f^{\lambda} \left( s,E^{\mathcal{F}_{s}}\left[ Y_{\phi(n,s)}^{n,\lambda} \right],Z_{s}^{n,\lambda} \right)  \right],t\in[0,T].
\end{equation*}
Then 
\begin{equation*}
\begin{aligned}
\vert p_{t}^{n,\lambda} \vert &\leq E^{\mathcal{F}_{t}} \left[ \int_{t}^{\phi(n,t)} \vert f^{\lambda} \left( s,E^{\mathcal{F}_{s}}\left[ Y_{\phi(n,s)}^{n,\lambda} \right],Z_{s}^{n,\lambda} \right) \vert \right]\\
&\leq C(1+M_{\lambda}+\lambda)\frac{T}{n}+CE^{\mathcal{F}_{t}} \left[ \int_{t}^{\phi(n,t)}\vert Z_{s}^{n,\lambda} \vert^{2-\varepsilon}ds \right]\\
&\leq C(1+M_{\lambda}+\lambda)\frac{T}{n}+CE^{\mathcal{F}_{t}} \left[ \left( \int_{t}^{\phi(n,t)}\vert Z_{s}^{n,\lambda} \vert^{2}ds \right)^{1-\frac{\varepsilon}{2}} \left( \phi(n,t)-t \right)^{\frac{\varepsilon}{2}} \right]\\
&\leq C(1+M_{\lambda}+\lambda)\frac{T}{n}+CN_{\lambda}^{2-\varepsilon}\left(\frac{T}{n} \right)^{\frac{\varepsilon}{2}}\\
&=C(1+\lambda)\frac{T}{n}+C\lambda^{max\{4-2\varepsilon,\frac{4}{\epsilon}-2\}}\left(\frac{T}{n} \right)^{\frac{\varepsilon}{2}}.
\end{aligned}
\end{equation*}
On the other hand, 
\begin{equation*}
\begin{aligned}
\vert q_{t}^{n,\lambda} \vert &\leq C\lambda \left[ (E^{\mathcal{F}_{t}}[Y_{\phi(n,t)}^{n,\lambda}])^{-}+\Vert S^{+} \Vert_{\mathcal{S}^{\infty}(R)}+\Vert Y^{\lambda} \Vert_{\mathcal{S}^{\infty}(R)} \right]\\
&\leq C\lambda 
\end{aligned}
\end{equation*}
from Lemma \ref{l2} and \ref{boundary_lemma}.
\end{proof}
\end{lemma}

\end{document}